\newtheorem{theorem}{Theorem}
\newtheorem*{theorem*}{Theorem}
\newtheorem{corollary}[theorem]{Corollary}
\newtheorem{rem}[theorem]{Remark}
\newtheorem{proposition}[theorem]{Proposition}
\theoremstyle{definition}
\newcommand{\nn}{\mathbb{N}}
\newcommand{\ee}{\varepsilon}
\newcommand{\mmm}{\mathcal{M}}
\newcommand{\nnn}{\mathcal{N}}
\newcommand{\uuu}{\mathcal{U}}
\newcommand{\cat}{^\smallfrown}
\begin{document}
\title{Metrical Characterizations \\ of super weakly compact operators}

\author{R. M. Causey}

\address{Department of Mathematics\\ University of South Carolina\\
Columbia, SC 29208\\ U.S.A.} \email{causey@math.sc.edu}

\author{S. J. Dilworth}

\address{Department of Mathematics\\ University of South Carolina\\
Columbia, SC 29208\\ U.S.A.} \email{dilworth@math.sc.edu}
\thanks{The second author was supported by the National Science Foundation under Grant Number DMS--1361461.}

\begin{abstract} We characterize super weakly compact operators as those through which binary tree and diamond and Laakso graphs may not be factored with uniform distortion.  \end{abstract}

\maketitle

\section{Introduction}

In \cite{Ribe}  Ribe showed that any two uniformly homeomorphic Banach spaces must be finitely representable in each other.  This result suggests the principle at the heart of the Ribe program, which is that a given local property of Banach spaces can be formulated as a purely metric property.  One important result in this direction is Bourgain's characterization of superreflexive Banach spaces as those into which one can embed the binary trees of arbitrary depth with uniform distortion \cite{Bourgain}.  Using other metric spaces besides binary trees, Johnson and Schechtman \cite{JS} and Baudier \cite{Baudier} produced further metric characterizations of superreflexive spaces. Johnson and Schechtman used the class of diamond graphs and the class of Laakso graphs.  Baudier used a single binary tree of infinite depth.  The main result of this work is to generalize these results to characterize super weakly compact operators. To that end,  rather than studying bi-Lipschitz maps of metric spaces into Banach spaces and estimating the distortion, we define a quantification of how well a given metric space can be preserved through a linear operator.  When the operator in question is the identity of a Banach space, this quantification recovers the notion of Lipschitz distortion, so that our results do indeed generalize the works of Bourgain, Johnson and Schechtman, and Baudier.   We say that a class $\mmm$ of metric spaces \emph{factors through} the linear operator $A:X\to Y$ if the members of $\mmm$ can be uniformly preserved through $A$, in a sense which is formally defined in Section $2$.  The main result of this work is the following.  

\begin{theorem} Let $A:X\to Y$ be an operator and let $\mmm$ be the class of binary trees, diamond graphs, Laakso graphs, or the binary tree of infinite depth.   Then $\mmm$ factors through $A$ if and only if $A$ fails to be super weakly compact.

\end{theorem}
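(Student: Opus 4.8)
The plan is to prove the two implications separately, exploiting the fact that super weak compactness of $A$ admits two complementary descriptions. On the one hand there is a factorization description in the spirit of Davis--Figiel--Johnson--Pe{\l}czy\'nski and Beauzamy: $A\colon X\to Y$ is super weakly compact if and only if $A=CB$ for some operators $B\colon X\to Z$ and $C\colon Z\to Y$, where $Z$ carries an equivalent norm whose modulus of uniform convexity is of power type $p$ for some $p\in[2,\infty)$. On the other hand there is a ``long separated sequence'' description, the operator form of James's finite tree characterization of superreflexivity: $A$ fails to be super weakly compact if and only if there is $\theta>0$ such that for every $n$ one can choose $x_1,\dots,x_n\in B_X$ with $\mathrm{dist}\big(A\,\mathrm{conv}\{x_1,\dots,x_k\},\,A\,\mathrm{conv}\{x_{k+1},\dots,x_n\}\big)\ge\theta$ for every $1\le k<n$. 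Both descriptions are either available in the cited literature or can be recorded as preliminary lemmas alongside the definition of ``factors through'' in Section $2$; I would establish them first.

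For the direction ``$A$ not super weakly compact $\Rightarrow$ $\mmm$ factors through $A$'', I would run, through $A$, the classical bi-Lipschitz constructions starting from the $\theta$-separated sequences above. Bourgain's construction \cite{Bourgain} of an embedding of the depth-$n$ binary tree out of such sequences adapts to a factoring of the family of finite binary trees through $A$ with distortion independent of $n$; concatenating these finite-depth factorings by Baudier's snaking technique \cite{Baudier} yields a factoring of the binary tree of infinite depth through $A$. In parallel, the Johnson--Schechtman constructions \cite{JS} of embeddings of the diamond graphs and Laakso graphs out of James-type sequences adapt, again through $A$, to uniform factorings of those two families. The only bookkeeping beyond the identity-operator case is to check that the relevant lower distance estimates persist after applying $A$, which is exactly what the separation of the sets $A\,\mathrm{conv}\{\cdots\}$ is designed to guarantee.

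For the converse, ``$\mmm$ factors through $A$ $\Rightarrow$ $A$ not super weakly compact'', I would argue the contrapositive: assume $A$ is super weakly compact and write $A=CB$ through $Z$ as above, with modulus of convexity of power type $p$. Two classical metric invariants of $Z$ then give the obstruction once pushed through the factorization. First, Bourgain's distortion lower bound shows the depth-$n$ binary tree embeds into $Z$ only with distortion growing like $(\log n)^{1/p}$, so the family of finite binary trees -- and hence the infinite binary tree, which contains each of them isometrically -- cannot factor through $A=CB$ with bounded distortion. Second, $Z$ is Markov $p$-convex (Pisier renorming together with Lee--Naor--Peres), while the Markov convexity constants of the diamond graphs and of the Laakso graphs are unbounded; transporting this inequality through $A=CB$ shows neither family can factor through $A$. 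This disposes of all four ``only if'' implications at once.

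The main obstacle, and where the new work concentrates, is the passage from the Banach-space statements to the operator statements: one must arrange the notion of ``factoring through $A$'' from Section $2$ so that it interacts correctly both with the factorization $A=CB$, so that metric estimates valid in the superreflexive space $Z$ transfer, and with the composition of the geometric constructions with $A$, so that separation of the convex-hull images literally yields the required lower bounds. A secondary technical point is verifying the operator form of the finite tree property/Beauzamy factorization in exactly the shape needed; if it is not directly citable, it follows from the ultrapower characterization of super weak compactness combined with James's finite tree characterization of superreflexivity applied in the ultrapower $A_{\mathcal U}$.
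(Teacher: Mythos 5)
Your forward direction (``$A$ not super weakly compact $\Rightarrow$ $\mmm$ factors through $A$'') is essentially the paper's argument: the paper derives exactly your ``long separated sequence'' statement (in the equivalent form of a $c$-basic sequence $(Ax_i)$ in the image of the unit ball all of whose convex combinations have norm $\geqslant\psi$, and with the extra refinement that the $x_i$ may be taken in any finite-codimensional subspace) from James's theorem applied in an ultrapower plus finite representability, and then runs the Bourgain, Johnson--Schechtman, and Baudier constructions through $A$. That half of your proposal is sound.

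The converse direction, however, rests on a claim that is not available: that every super weakly compact operator factors as $A=CB$ through a space $Z$ that is superreflexive (let alone with power-type modulus of convexity). Beauzamy's theorem is a \emph{renorming} theorem, not a factorization theorem: it provides an equivalent norm $|\cdot|$ on $X$ itself making the \emph{operator} $A:(X,|\cdot|)\to Y$ uniformly convex, in the mixed sense that $\bigl\|\frac{x_1+x_2}{2}\bigr\|$ is bounded away from $1$ whenever $x_1,x_2\in B_X$ and $\bigl\|\frac{Ax_1-Ax_2}{2}\bigr\|\geqslant\ee$. Whether every super weakly compact operator factors through a superreflexive space is a well-known open problem (the super-version of Davis--Figiel--Johnson--Pe{\l}czy\'nski fails to be known), so you cannot invoke it, and consequently you cannot transport Bourgain's tree lower bound or the Lee--Naor--Peres Markov convexity obstruction from an intermediate space $Z$. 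The paper's route around this is to work directly with the operator modulus $\delta(\ee)$ above and to redo the metric lower-bound arguments (Kloeckner's for $B_n$, Johnson--Schechtman's self-improvement for $D_n$ and $L_n$) in this mixed setting, where the separation hypothesis is measured after applying $A$ but the midpoint estimate is taken in $X$ --- which is precisely what the definition of ``factors through'' requires. If you want to keep your Markov-convexity route for the diamonds and Laakso graphs, you would need to formulate and prove an operator version of Markov $p$-convexity from the renormed operator, which is additional work you have not supplied; as written, the converse half of your proof has a genuine gap.
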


The idea of a superproperty of an operator is due to Pietsch \cite{Pietsch}.  Recall that for a property $P$, an operator $A:X\to Y$ has super $P$ if for every ultrafilter $\uuu$, the induced operator $A_\uuu:X_\uuu\to Y_\uuu$ has property $P$.   Thus $A:X\to Y$ is super weakly compact if for every ultrafilter $\uuu$, $A_\uuu:X_\uuu\to Y_\uuu$ is weakly compact.  In the case that $A$ is the identity of $X$, this recovers superreflexivity of $X$.   

\section{Notation and terminology}

For any set $\Lambda$, we let $\Lambda^{<\nn}$ denote the finite sequences in $\Lambda$.  For any $s\in\Lambda^{<\nn}$, we let $|s|$ denote the length of $s$.  For any $s\in \Lambda^{<\nn}$ and any integer $i$ with $0\leqslant i\leqslant |s|$, we let $s|_i$ denote the initial segment of $s$ having length $i$. We write $s\prec t$ if $s$ is a proper initial segment of $t$.  If $s,t\in \Lambda^{<\nn}$, we let $s\cat t$ denote the concatenation of $s$ with $t$.  If $s$ is a non-empty sequence, we let $s^-$ denote the maximal proper initial segment of $s$.

Given Banach spaces $X,Y$ and an operator $A:X\to Y$, a family $\mmm$ of metric spaces, and a number $D\geqslant 1$, we say $\mmm$ $D$-\emph{factors through} $A$ if for each $(M,d)\in \mmm$, there exists a function $f:(M,d)\to X$ such that for every $s,t\in M$, $$D^{-1}d(s,t)\leqslant \|Af(s)-Af(t)\|, \|f(s)-f(t)\|\leqslant d(s,t).$$   We say $\mmm$ \emph{factors through} $A$ if it $D$-factors through $A$ for some $D\geqslant 1$.  If $\mathcal{M}=\{(M,d)\}$, we will say $(M,d)$ factors through $A$ if $\mmm$ factors through $A$.   

Of course, $\mmm$ factors through an operator $A$ if and only if $\mmm$ factors through any non-zero multiple of $A$, so if we are not concerned with the constant $D$, we may assume $\|A\|\leqslant 1$.  In this case, in order to see that $\mmm$ factors through $A$, it is sufficient to prove that there exists $D\geqslant 1$ such that for each $(M,d)\in \mmm$, there exists $f:M\to X$ such that for each $s,t\in M$, $\|f(s)-f(t)\|\leqslant d(s,t)$ and $D^{-1}d(s,t)\leqslant \|Af(s)-Af(t)\|$.   Of course, it is easy to see that if $A$ is an identity operator, $\mmm$ factors through $A$ if and only if the members of $\mmm$ admit uniformly bi-Lipschitz embeddings into $X$.

\section{Metric characterization of super weakly compact operator}

\subsection{Basics of super weakly compact operators}

Recall that an operator $A:X\to Y$ is \emph{super weakly compact} if and only if 
for any ultrafilter $\uuu$, the ultrapower $A_\uuu:X_\uuu\to Y_\uuu$ is weakly compact.  Beauzamy \cite{Beauzamy}  introduced super weakly compact operators in \cite{Beauzamy} where they are called   \emph{op\'erateurs uniform\'ement convexifiants}. 

 We will use the following facts.  

\begin{rem}\upshape Let $A:X\to Y$ be an operator.  Let $R=R(A)=\sup \{\|y^{**}\|_{Y^{**}/Y}: y^{**}\in \overline{AB_X}^{w^*}\}$, where $Y$ is identified with a subspace of $Y^{**}$ and the $w^*$-closure is taken in $Y^{**}$.  Then $A$ fails to be weakly compact if and only if $R(A)>0$, and in this case there exist $\theta=\theta(R)>0$, $b=b(R)\geqslant 1$, and $(x_n)\subset B_X$ so that $(Ax_n)$ is $b$-basic and every convex combination $y$ of $(Ax_n)$ has norm at least $\theta$.   This result is due to James \cite{James}.  
\label{remark1}
\end{rem}

\begin{rem}\upshape Let $X$ be a Banach space, $Z$ a subspace of $X$,  $A:X\to Y$ an operator, and $\uuu$ an ultrafilter.  \begin{enumerate}[(i)] \item The closed subspace  $X_\uuu(Z):=\{(x_n)+\nnn_X\in X_\uuu: \lim_\uuu \|x_n\|_{X/Z}=0\}$ of $X_\uuu$ is isometrically isomorphic to $Z_\uuu$. \item If $\dim(X/Z)<\infty$, $X_\uuu/X_\uuu(Z)$ is isometrically isomorphic to $X/Z$.  \item $(A|_Z)_\uuu:Z_\uuu\to Y_\uuu$ is isometrically identifiable with $A_\uuu|_{X_\uuu(Z)}:X_\uuu(Z)\to Y_\uuu$.  
\label{remark2}
\end{enumerate}

Here, $\nnn_X=\{(x_n)\in \ell_\infty(X): \lim_\uuu \|x_n\|=0\}$, and $\nnn_Y,\nnn_Z$ are defined similarly.  

The first statement follows from the fact that the map $Z_\uuu\ni (z_n)+\nnn_Z\mapsto (z_n)+\nnn_X\in X_\uuu(Z)$ is an isometric isomorphism.  The second follows from the fact that since bounded sequences in $X/Z$ must converge through $\uuu$, for any bounded sequence $(x_n)\in \ell_\infty(X)$, $\lim_\uuu (x_n+Z)\in X/Z$ exists.  One then checks that the map $(x_n)+X_\uuu(Z)\mapsto \lim_\uuu (x_n+Z)$ is a well-defined isometric isomorphism.   The third fact follows from the first together with the fact that $(A|_Z)_\uuu((z_n)+\nnn_Z)=(Az_n)+\nnn_Y=A_\uuu((z_n)+\nnn_X)$.

\end{rem}

\begin{proposition}  If $A:X\to Y$ fails to be weakly compact, then $\inf_{\dim(X/Z)<\infty} R(A|_Z)>0$, where $R$ is defined in Remark \ref{remark1}.  
\label{uniform}
\end{proposition}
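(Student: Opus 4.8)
The plan is to push the entire statement into the third dual $X^{***}$, where the infimum over all finite-codimensional subspaces collapses to a single distance. The reason a naive argument inside $X$ is useless here: one would want to split $X=Z\oplus F$ with $F$ finite-dimensional and then estimate $R(A|_Z)\gtrsim R(A)/\|P_F\|$, but projections onto finite-codimensional subspaces can have arbitrarily large norm, so this produces nothing uniform in $Z$.

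First I would use Remark \ref{remark1}: since $A$ fails to be weakly compact, $R(A)>0$. Recalling that $\overline{AB_X}^{w^*}=A^{**}(B_{X^{**}})$ (Goldstine together with the $w^*$-to-$w^*$ continuity of $A^{**}$), choose $x_0^{**}\in B_{X^{**}}$ with $\mathrm{dist}(A^{**}x_0^{**},Y)>0$, and then, by Hahn--Banach, a norm-one functional $\psi\in Y^\perp\subseteq Y^{***}$ with $\psi(A^{**}x_0^{**})>0$. Put $\Xi:=\psi\circ A^{**}=A^{***}\psi\in X^{***}$. Then $\Xi\neq 0$, because $\Xi(x_0^{**})=\psi(A^{**}x_0^{**})>0$; and $\Xi$ annihilates the canonical image of $X$ in $X^{**}$, because $\Xi(x)=\psi(Ax)=0$ for $x\in X$ (as $Ax\in Y$ and $\psi\in Y^\perp$). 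Thus $\Xi\in X^\perp\setminus\{0\}$.

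The main step is to show $R(A|_Z)\geqslant\|\Xi|_{Z^{**}}\|$ for every finite-codimensional $Z\subseteq X$, and to identify the right-hand side. Here $Z^{**}$ is, canonically and isometrically, the $w^*$-closed finite-codimensional subspace $\{x^{**}\in X^{**}:x^{**}|_{Z^\perp}=0\}$ of $X^{**}$; on it $(A|_Z)^{**}$ agrees with $A^{**}$, so $\overline{(A|_Z)B_Z}^{w^*}=A^{**}(B_{Z^{**}})$. Hence for $z^{**}\in B_{Z^{**}}$,
\[ R(A|_Z)\;\geqslant\;\mathrm{dist}(A^{**}z^{**},Y)\;\geqslant\;|\psi(A^{**}z^{**})|\;=\;|\Xi(z^{**})|, \]
using that $\psi$ is norm one and lies in $Y^\perp$; taking the supremum over $z^{**}$ gives $R(A|_Z)\geqslant\|\Xi|_{Z^{**}}\|$. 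Now the annihilator of $Z^{**}$ inside $X^{***}$ is precisely the canonical copy of the finite-dimensional space $Z^\perp$, so $\|\Xi|_{Z^{**}}\|=\mathrm{dist}_{X^{***}}(\Xi,Z^\perp)$. As $Z$ ranges over all finite-codimensional subspaces, $Z^\perp$ ranges over all finite-dimensional subspaces of $X^*$, whose union is $X^*$; therefore
\[ \inf_{\dim(X/Z)<\infty}R(A|_Z)\;\geqslant\;\inf_{\dim(X/Z)<\infty}\mathrm{dist}_{X^{***}}(\Xi,Z^\perp)\;=\;\mathrm{dist}_{X^{***}}(\Xi,X^*). \]

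To finish, I would observe that $\mathrm{dist}_{X^{***}}(\Xi,X^*)>0$: the canonical copy of $X^*$ is norm-closed in $X^{***}$ and meets $X^\perp$ only at $0$ (if $\widehat f\in X^\perp$ then $f(x)=0$ for all $x\in X$, so $f=0$), whereas $\Xi\in X^\perp\setminus\{0\}$; hence $\Xi\notin X^*$ and the distance is strictly positive, which proves the proposition. I expect the only genuinely technical point to be the bidual bookkeeping in the third paragraph --- identifying $Z^{**}$ as a subspace of $X^{**}$ and its annihilator as the canonical copy of $Z^\perp$ in $X^{***}$ --- which is entirely routine; the real content is the observation that the subspaces $Z^\perp$ exhaust only $X^*$ inside $X^{***}$, not all of $X^{***}$, and that $\Xi$ was manufactured precisely so as to lie off $X^*$.
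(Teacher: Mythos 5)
Your proof is correct, but it takes a genuinely different route from the paper's. You pass to the third dual: using Goldstine to write $\overline{AB_X}^{w^*}=A^{**}(B_{X^{**}})$, you manufacture $\Xi=A^{***}\psi\in X^\perp\setminus\{0\}\subseteq X^{***}$, identify $Z^{**}$ with $Z^{\perp\perp}$ and its annihilator with the canonical copy of $Z^\perp$, and conclude that $R(A|_Z)\geqslant \|\Xi|_{Z^{\perp\perp}}\|=\mathrm{dist}_{X^{***}}(\Xi,Z^\perp)\geqslant \mathrm{dist}_{X^{***}}(\Xi,X^*)>0$, the last inequality because $\hat{X^*}$ is norm-closed and meets $X^\perp$ only at $0$. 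All the bidual bookkeeping checks out (the identification $(Z^{\perp\perp})^\perp=\hat{Z^\perp}$ follows from the inclusion $\hat{Z^\perp}\subseteq(Z^{\perp\perp})^\perp$ plus a dimension count, since $Z^{\perp\perp}$ has codimension $\dim(X/Z)$ in $X^{**}$). The paper instead argues entirely inside $X$: it takes a net $(x_\lambda)\subset B_X$ with $Ax_\lambda\to y^{**}$ weak-$*$, uses norm-compactness of the ball of the finite-dimensional quotient $X/Z$ to extract a subnet with $x_\lambda+Z\to x+Z$, and corrects $x_\lambda$ by elements of $Z$ of norm at most $3$ to land back in $3B_Z$. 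That argument is more elementary (no third dual) and yields the explicit quantitative bound $R(A|_Z)\geqslant R(A)/3$, which is the form quoted in Corollary \ref{main corollary}; your bound $\mathrm{dist}_{X^{***}}(\Xi,X^*)$ is uniform in $Z$ but not expressed as a universal fraction of $R(A)$. For the proposition as stated, and for its downstream use (all that is needed is a positive lower bound independent of $Z$), either version suffices.
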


\begin{proof} We will show that for any $Z\leqslant X$ with $\dim(X/Z)<\infty$, $R(A|_Z)\geqslant R(A)/3$.  Fix $0<\theta<R(A)$.   There exists $y^{**}\in \overline{AB_X}^{w^*}$ with $\|y^{**}\|_{Y^{**}/Y}>\theta$ by the definition of $R(A)$.   Fix a net $(x_\lambda)\subset B_X$ such that $Ax_\lambda\underset{w^*}{\to}y^{**}$.   By passing to a subnet of $(x_\lambda)$, we may fix $x\in X$ such that $x_\lambda+Z\to x+Z$ in norm in $X/Z$. Of course, $x+Z\in B_{X/Z}$.  By replacing $x$ with a different member of the equivalence class $x+Z$, we may assume $\|x\|<2$.   For each $\lambda$, we fix $z_\lambda\in Z$ such that $\|x_\lambda-x-z_\lambda\|\leqslant 2\|x_\lambda-x\|_{X/Z}\underset{\lambda}{\to}0$.   Thus $\underset{\lambda}{\lim\sup} \|z_\lambda\| \leqslant \underset{\lambda}{\lim\sup}\|x_\lambda-x\|<3$.    By passing to a subnet once more, we may assume $z_\lambda/3\in B_Z$, and note that $Az_\lambda/3\underset{w^*}{\to} (y^{**}-Ax)/3$ and $\|(y^{**}-Ax)/3\|_{Y^{**}/Y}\geqslant \theta/3$.

\end{proof}

\begin{corollary} If $A:X\to Y$ fails to be super weakly compact, then there exists $\psi>0$ and $c\geqslant 1$ such that for every natural number $n\in \nn$, and every $Z\leqslant X$ such that $\dim(X/Z)<\infty$, there exists $(z_i)_{i=1}^n\subset B_Z$ such that $(Ax_i)_{i=1}^n$ is $c$-basic and every convex combination $y$ of $(Ax_i)_{i=1}^n$ has norm at least $\psi$.   

\label{main corollary}
\end{corollary}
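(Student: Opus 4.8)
The plan is to combine Proposition \ref{uniform} with the fact that failure of super weak compactness passes to an ultrapower, then transfer a quantitative James-type configuration from the ultrapower back to the original operator. First I would observe that since $A$ fails to be super weakly compact, there is an ultrafilter $\uuu$ so that $A_\uuu:X_\uuu\to Y_\uuu$ fails to be weakly compact. Applying Proposition \ref{uniform} to $A_\uuu$, together with Remark \ref{remark1} applied to the restrictions $A_\uuu|_W$, we obtain a single $\psi>0$ and $c\geqslant 1$ (depending only on $R(A_\uuu)$, via $\theta$ and $b$ in Remark \ref{remark1}, and on the factor $1/3$ in Proposition \ref{uniform}) such that for every finite-codimensional $W\leqslant X_\uuu$ there is a sequence $(w_i)_{i=1}^n\subset B_W$ with $(A_\uuu w_i)_{i=1}^n$ being $c$-basic and every convex combination having norm $\geqslant \psi$. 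Note $n$ is arbitrary, so the same $\psi,c$ work for all $n$ at once.

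Next I would reduce from an arbitrary finite-codimensional $Z\leqslant X$ to a controlled finite-codimensional subspace of $X_\uuu$. Given $Z\leqslant X$ with $\dim(X/Z)<\infty$, set $W:=X_\uuu(Z)$ as in Remark \ref{remark2}(i); by Remark \ref{remark2}(ii), $X_\uuu/W$ is isometrically isomorphic to $X/Z$, so $W$ is finite-codimensional in $X_\uuu$ of exactly the same codimension. By Remark \ref{remark2}(iii), the operator $A_\uuu|_W$ is isometrically identifiable with $(A|_Z)_\uuu:Z_\uuu\to Y_\uuu$. Applying the configuration from the previous paragraph to this $W$ yields $(w_i)_{i=1}^n\subset B_W\cong B_{Z_\uuu}$ whose images under $(A|_Z)_\uuu$ are $c$-basic with convex combinations of norm $\geqslant\psi$.

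The final step is a local perturbation/finitary argument to descend from $Z_\uuu$ to $Z$. Write each $w_i$ as a class $(z^i_k)_k+\nnn_Z$ with $(z^i_k)_k\in\ell_\infty(Z)$ and $\|w_i\|_{Z_\uuu}=\lim_\uuu\|z^i_k\|\leqslant 1$; after a harmless rescaling by $(1+\e)$ we may assume $\|z^i_k\|\leqslant 1$ for $\uuu$-almost all $k$. The conditions "$((A|_Z)_\uuu w_i)_{i=1}^n$ is $c$-basic" and "every convex combination has norm $\geqslant\psi$" are, for each fixed $n$, determined by finitely many norm inequalities among finitely many vectors; since $\lim_\uuu$ of the corresponding quantities for $(A z^i_k)_{i=1}^n$ equals the quantity for $((A|_Z)_\uuu w_i)_i$, the set of $k$ for which $(A z^i_k)_{i=1}^n$ is $(c+\e)$-basic with convex combinations of norm $\geqslant \psi-\e$ lies in $\uuu$, hence is nonempty. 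Picking one such $k$ and relabeling $z_i:=z^i_k$ (after the rescaling, absorbing $\e$ into slightly worse constants $\psi'<\psi$, $c'>c$ which are still uniform in $n$ and $Z$) gives the desired $(z_i)_{i=1}^n\subset B_Z$. The main obstacle is the bookkeeping in this last step: one must phrase $c$-basicness and the convex-combination lower bound as finitely many continuous conditions on the finite tuple so that the ultrafilter limit genuinely transfers them, and one must check that the constants degrade only by an arbitrarily small amount independent of $n$ and $Z$ — but this is exactly the standard "finite conditions pass to a $\uuu$-large set" principle, so no real difficulty arises beyond careful choice of which finitely many inequalities to track.
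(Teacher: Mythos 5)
Your proposal is correct and follows essentially the same route as the paper: pass to an ultrapower where $A_\uuu$ fails weak compactness, use Proposition \ref{uniform} and Remark \ref{remark1} to get a uniform James-type configuration in $B_{X_\uuu(Z)}\cong B_{Z_\uuu}$, and then transfer finite subsequences back to $B_Z$ with an $\ee$-loss in the constants. The only difference is cosmetic: the paper invokes the finite representability of $(A|_Z)_\uuu$ in $A|_Z$ as a black box, whereas you carry out that transfer by hand via the "finitely many conditions hold on a $\uuu$-large set" argument.
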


\begin{proof} If $A$ fails to be super weakly compact, we may fix an ultrafilter $\uuu$ such that $A_\uuu:X_\uuu\to Y_\uuu$ fails to be weakly compact.   If $Z\leqslant X$ is finite codimensional, then $R(A_\uuu|_{X_\uuu(Z)})\geqslant R(A_\uuu)/3>0$, since $A_\uuu$ fails to be weakly compact and since $\dim(X_\uuu/X_\uuu(Z))=\dim(X/Z)<\infty$.    Thus we deduce the existence of $\theta>0$ and $b\geqslant 1$ depending only on $R(A_\uuu)$ such that there exists an infinite sequence $(\chi_i)\subset B_{X_\uuu(Z)}$ so that $(A_\uuu\chi_i)$ is $b$-basic and each convex combination of $(A_\uuu\chi_i)$ has norm at least $\theta$.   We identify $(A|_Z)_\uuu:Z_\uuu\to Y_\uuu$ with $A_\uuu|_{X_\uuu(Z)}:X_\uuu(Z)\to Y_\uuu$ and assume that this sequence $(\chi_i)$ is contained in $Z_\uuu$ and replace $A_\uuu|_{X_\uuu(Z)}$ with $(A|_Z)_\uuu$.    Since $(A|_Z)_\uuu$ is finitely representable in $A|_Z$, for each $n\in \nn$ and $\ee>0$, we may fix $(z_i)_{i=1}^n\in B_Z$ such that $(Az_i)_{i=1}^n$ is $(1+\ee)$-equivalent to $((A|_Z)_\uuu \chi_i)_{i=1}^n$.  Thus we deduce the result for any constant $\psi\in (0, \theta)$ and any $c>b$.

\end{proof}

\subsection{Binary trees, diamond graphs, and Laakso graphs}

We let $B_n=\cup_{i=0}^n \{0,1\}^i$, including the empty sequence $\varnothing$.  We let $B=\cup_{n\in \nn} B_n$. We treat $B$ and $B_n$ as graphs, where $t$ is adjacent to $t\cat 0$ and $t\cat 1$.    We endow $B_n$ and $B$ with the graph distances, noting that each $B_n$ is isometrically a subset of $B$.    We next let $r_n=2^n-1$ and $L_n=\{t\in B: r_n\leqslant |t|<r_{n+1}\}$ for $n=0, 1, 2, \ldots$.   We let $\ell(s)$ denote the $n$ such that $r_n\leqslant |s|<r_{n+1}$.   

Our construction of the diamond and Laakso graphs follow the presentation given in \cite{JS}.  We next recall the definitions of the Hamming and diamond graphs. The vertex set of the Hamming graph is $\{0, 1\}^{2^n}$, and the edge set $E_n$ consists of all pairs of vertices which differ at exactly one coordinate.     We let $D_n$ denote the vertex set of the diamond graph, and it will be a subset of $\{0,1\}^{2^n}$.  The edge set of the diamond graph will be the restriction $\{(s,t)\in E_n: s,t\in D_n\}$ of $E_n$ to the vertex set $D_n$.  For any $n\in \nn$, let $d:\{0,1\}^n\to \{0,1\}^{2n}$ be the doubling function $d(k_1, \ldots, k_n)=(k_1, k_1, k_2, k_2, \ldots, k_n, k_n)$.   We let $D_0=\{(0), (1)\}$.  If $D_n$ has been defined, we let $D_n'=\{d(t): t\in D_n\}$ and $D_n''=\{s\in \{0,1\}^{2^{n+1}}: (\exists t\in D_n')((s,t)\in E_{n+1})\}$.     We then let $D_{n+1}=D_n'\cup D_n''$.

We also recall the definitions of the Laakso graphs.  The Laakso graph $L_n$ will be a subset of $\{0,1\}^{4^n}$.  Given any $n\in \nn$ and $s=(a_i)_{i=1}^n\in \{0,1\}^n$, let $$q(s)=(a_1, a_1, a_1, a_1, a_2, a_2, a_2, a_2, \ldots, a_n, a_n, a_n, a_n)\in \{0,1\}^{4n}.$$   Let $L_0=\{(0), (1)\}$ with a single edge.   Suppose that $L_{n-1}$ has been defined.  Let $L_n'=\{q(s):s\in L_{n-1}\}$.   Suppose that $s,t\in L_{n-1}$ are equal except at their $j^{th}$ coordinate.   Suppose also that $s=u\cat 0\cat v$ and $t=u\cat 1\cat v$.   Let $L_n^{s,t}$ consist of the following sequences: $$q(u)\cat (1,1,1,1)\cat q(v)$$ $$q(u)\cat (1,1,0,1)\cat q(v)$$ $$ q(u)\cat (1,1,0,0)\cat q(v)\hspace{6mm} q(u)\cat (0,1,0,1)\cat q(v)$$ $$q(u)\cat (0,1,0,0)\cat q(v)$$ $$q(u)\cat (0,0,0,0)\cat q(v),$$  where each vertex is adjacent to the vertices immediately above or below it.    We then let $$L_n=L_n'\cup \bigcup \bigl\{L_n^{s,t}: s,t\in L_{n-1}, s,t\text{\ differ at one coordinate}\bigr\}.$$

\subsection{Uniformly convex operators and non-embeddability}

Given an operator $A:X\to Y$, we define $\delta:(0, \infty)\to [0, \infty)$ by $$\delta(\ee)=1- \sup\Bigl\{ \Bigl\|\frac{x_1+x_2}{2}\Bigr\|:x_1, x_2\in B_X, \Bigl\|\frac{Ax_1-Ax_2}{2}\Bigr\|\geqslant \ee\Bigr\}.$$   Then $A$ is \emph{uniformly convex} if $\delta(\ee)>0$ for every $\ee>0$.

We recall the following theorem.

\begin{theorem}\cite{Beauzamy} If $A:X\to Y$ is super weakly compact, there exists an equivalent norm $|\cdot|$ on $X$ making $A:(X, |\cdot|)\to Y$ uniformly convex.   
\end{theorem}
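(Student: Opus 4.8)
The plan is to prove this as the operator analogue of Enflo's theorem that a superreflexive space admits an equivalent uniformly convex norm, via the standard two-step scheme: first I would extract from super weak compactness of $A$ a combinatorial ``bounded dyadic tree'' condition, and then I would use it to manufacture an equivalent norm on $X$ relative to which $A$ is uniformly convex.

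For the first step, call a finite dyadic family $(x_s)_{s\in\{0,1\}^{\leqslant m}}\subset B_X$ a $\delta$-\emph{tree for} $A$ if $x_s=\frac{1}{2}(x_{s\cat 0}+x_{s\cat 1})$ and $\|A(x_{s\cat 0}-x_{s\cat 1})\|\geqslant\delta$ whenever $|s|<m$. I would show that if $A$ is super weakly compact, then for every $\delta>0$ there is an integer $m(\delta)$ bounding the height of every $\delta$-tree for $A$. Indeed, if $A$ admitted $\delta$-trees of all finite heights $1,2,3,\dots$, then passing to an ultrapower (via the constructions recalled around Remark \ref{remark2}) and taking ultralimits of these trees coordinatewise would produce an \emph{infinite} $\delta$-tree $(\chi_s)_{s\in\{0,1\}^{<\nn}}\subset B_{X_\uuu}$ for $A_\uuu$. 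The values $(A_\uuu\chi_s)$ then assemble, over $\{0,1\}^\nn$ with the dyadic filtration, into a bounded $Y_\uuu$-valued martingale all of whose increments have norm at least $\delta/2$. If $A_\uuu$ were weakly compact, then $\overline{A_\uuu B_{X_\uuu}}$ would be weakly compact, hence so would its closed convex hull (Krein--Smulian), which would therefore have the Radon--Nikodym property; but then this martingale would converge in norm, contradicting the lower bound on its increments. So $A_\uuu$ would fail to be weakly compact, contradicting super weak compactness of $A$. This step, and its equivalence with the James-type condition of Remark \ref{remark1}, is classical.

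For the second step I would carry out the Enflo--Pisier renorming construction, adapted to $A$: define an equivalent norm $|\cdot|$ on $X$ --- either as a supremum, over finite dyadic $X$-valued martingales $(f_k)$ representing $x$ and suitably normalized, of $\|\cdot\|_X^q$ of the root plus a $\kappa$-weighted sum of the quantities $\mathbb{E}\|A(f_k-f_{k-1})\|_Y^q$, or via the equivalent infimum over weighted dyadic representations of $x$ --- arranged so that, for a fixed exponent $q<\infty$, a fixed gain $\kappa>0$, and every finite dyadic $X$-valued martingale $(f_k)_{k=0}^n$,
$$ |\mathbb{E}f_0|^q+\kappa\sum_{k=1}^n\mathbb{E}\|A(f_k-f_{k-1})\|_Y^q\leqslant\mathbb{E}|f_n|^q. $$
Granting the existence of such a norm, uniform convexity of $A\colon(X,|\cdot|)\to Y$ is immediate: given $x_1,x_2$ with $|x_1|,|x_2|\leqslant 1$ and $\|A(x_1-x_2)\|\geqslant 2\ee$, apply the inequality to the one-step martingale $f_0\equiv\frac{x_1+x_2}{2}$, $f_1\in\{x_1,x_2\}$, obtaining $\bigl|\frac{x_1+x_2}{2}\bigr|^q+\kappa\ee^q\leqslant 1$, hence $\bigl|\frac{x_1+x_2}{2}\bigr|\leqslant(1-\kappa\ee^q)^{1/q}<1$, i.e. the modulus defined at the start of this subsection, computed for $(X,|\cdot|)$, is strictly positive at every $\ee>0$.

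The main obstacle is precisely the existence claim in the second step: that the construction produces a genuine norm equivalent to $\|\cdot\|_X$ and not a degenerate object. This is where the bounded-tree property of the first step enters --- it caps the defining supremum (equivalently, keeps the competing infimum bounded below), because along any competing martingale or weighted representation the $A$-separation of the increments cannot be sustained past height $m(\delta)$. Establishing this while threading $A$ through every estimate is the technical heart of the argument, and is exactly the content of Enflo's theorem in the identity case and of Beauzamy's theorem \cite{Beauzamy} in general. The first step, by contrast, is routine given the ultrapower machinery and Remarks \ref{remark1}--\ref{remark2}.
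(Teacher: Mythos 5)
The paper offers no proof of this statement: it is recalled verbatim from Beauzamy \cite{Beauzamy}, so there is no internal argument to compare yours against. Your outline reproduces the standard two-step strategy from the literature (a finite $\delta$-tree bound extracted via ultrapowers, followed by an Enflo--Pisier martingale renorming), and your first step is essentially sound: a diagonal choice of representatives turns height-$n$ $\delta$-trees into an infinite $\delta$-tree in $B_{X_\uuu}$, whose image under $A_\uuu$ is a bounded dyadic martingale with increments of norm at least $\delta/2$ taking values in $\overline{\mathrm{co}}(A_\uuu B_{X_\uuu})$; weak compactness of that set (Krein--\v{S}mulian) together with the Radon--Nikodym property of weakly compact convex sets forbids such a martingale. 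Your derivation of uniform convexity of $A$ from the displayed $q$-th power martingale inequality, via the one-step martingale $f_0\equiv\frac{x_1+x_2}{2}$, $f_1\in\{x_1,x_2\}$, is also correct.

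The gap is the one you name yourself: the existence of an equivalent norm $|\cdot|$ satisfying that inequality for some $q<\infty$ and $\kappa>0$ is the entire content of the theorem, and you defer it to ``exactly the content of \dots\ Beauzamy's theorem,'' i.e., to the statement being proved. As written, the argument is circular at its central step. To close it you would need to actually run the Enflo/Pisier construction relative to $A$: use the tree bound $m(\delta)$ to show that any competing martingale or weighted dyadic representation can contribute only boundedly many $\delta$-separated $A$-increments, deduce that the candidate supremum (or infimum) is finite, homogeneous, and equivalent to $\|\cdot\|_X$, and then verify the power-type inequality; in the operator setting one must also check that every estimate survives with $\|A(\cdot)\|_Y$ replacing $\|\cdot\|_X$ on the increments. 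None of that is routine, and none of it appears in the proposal. Since the paper itself treats the theorem as a black box from \cite{Beauzamy}, citing it is legitimate for the paper's purposes, but your text should be read as a plan plus a citation rather than a proof.
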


\begin{theorem} If $A:X\to Y$ is a super weakly compact operator, none of the families $\{B_n\}$, $\{D_n\}$, $\{L_n\}$ factors through $A$. 

\end{theorem}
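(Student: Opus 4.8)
The plan is to reduce to the case where $A$ is uniformly convex and then adapt the rigidity arguments of Bourgain \cite{Bourgain} (for the binary trees) and of Johnson and Schechtman \cite{JS} (for the diamond and Laakso graphs) to the operator setting, with an operator form of the ``approximate midpoint'' principle playing the role of the classical modulus-of-convexity inequality. First note that whether $\mmm$ factors through $A$ is unaffected by replacing the norm of $X$ by an equivalent norm, or $A$ by a non-zero scalar multiple: each such change merely rescales the constant $D$. Hence, by the preceding renorming theorem \cite{Beauzamy}, we may assume $\|A\|\le 1$ and that $A$ is uniformly convex, with modulus $\delta$ satisfying $\delta(\e)>0$ for all $\e>0$. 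Suppose, toward a contradiction, that one of $\{B_n\}$, $\{D_n\}$, $\{L_n\}$ factors through $A$; fix $D\ge 1$ and, for each $n$, a map $f_n$ from the $n$-th graph $M_n$ into $X$ with $\|f_n(s)-f_n(t)\|\le d(s,t)$ and $\|Af_n(s)-Af_n(t)\|\ge D^{-1}d(s,t)$ for all vertices $s,t$.

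The engine of the proof is the following. If $m$ is a \emph{metric midpoint} of $s$ and $t$, i.e.\ $d(s,m)=d(m,t)=\tfrac12 d(s,t)$, then
\[\Bigl\|f_n(m)-\tfrac12\bigl(f_n(s)+f_n(t)\bigr)\Bigr\|\le\bigl(1-\delta(D^{-1})\bigr)\cdot\tfrac12\, d(s,t).\]
Indeed, with $a=d(s,t)$, $u=\tfrac2a\bigl(f_n(s)-f_n(m)\bigr)$ and $v=\tfrac2a\bigl(f_n(t)-f_n(m)\bigr)$, the upper bounds place $u,v\in B_X$, while $\bigl\|\tfrac12 A(u-v)\bigr\|=a^{-1}\|Af_n(s)-Af_n(t)\|\ge D^{-1}$; uniform convexity of $A$ gives $\bigl\|\tfrac12(u+v)\bigr\|\le 1-\delta(D^{-1})$, and rescaling yields the claim. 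Observe that the ambient norm is what keeps $u,v$ in the unit ball whereas the \emph{operator} norm is what supplies the separation needed to invoke $\delta$; this is essentially the only point where the hypotheses on $A$ enter, which is why the classical arguments survive the passage from a uniformly convex space to a uniformly convex operator.

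For the diamond and Laakso graphs one exploits the recursive structure of \cite{JS}: each of $D_n$, $L_n$ contains, at every scale $k=1,\dots,n$ and at geometrically many locations, a gadget with two ``poles'' $P_0,P_1$ and two ``equator'' vertices $E_0,E_1$ such that each $E_i$ is a metric midpoint of $\{P_0,P_1\}$ and $d(E_0,E_1)=d(P_0,P_1)$, the common size of the scale-$k$ gadgets decaying geometrically in $k$. Applying the midpoint lemma to $E_0$ and to $E_1$ places both images $f_n(E_i)$ within $\bigl(1-\delta(D^{-1})\bigr)\cdot\tfrac12 d(P_0,P_1)$ of the single point $\tfrac12(f_n(P_0)+f_n(P_1))$, hence $f_n(E_0)$ and $f_n(E_1)$ within $\bigl(1-\delta(D^{-1})\bigr)d(E_0,E_1)$ of each other, while the operator lower bound forces $\|Af_n(E_0)-Af_n(E_1)\|\ge D^{-1}d(E_0,E_1)$. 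One gadget alone yields only the vacuous $D^{-1}\le 1-\delta(D^{-1})$; the substance is to make the convexity gain compound over the $n$ scales. Following \cite{JS}, one packages the per-gadget estimates into one telescoping inequality --- a Markov-convexity-type estimate for the uniformly convex operator $A$, summing a fixed power (or the modulus $\delta$) of the branching discrepancies over all gadgets at all scales --- whose left side is bounded below by a quantity of order $n$ depending only on $D$ and $\delta$, and whose right side is at most a fixed multiple of the normalized diameter. This bounds $n$ in terms of $D$ and $\delta$, contradicting the assumption. (Alternatively one first invokes an operator analogue of Pisier's renorming theorem to arrange that the modulus of convexity of $A$ has power type, after which the telescoping is the standard Markov-$p$-convexity computation.)

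For the binary trees metric midpoints are unique --- a node is the unique midpoint of its two children --- so the collision of two midpoints used above is unavailable, and one runs Bourgain's argument \cite{Bourgain} instead. Fix a leaf $\ell$ of $B_n$ with root-to-leaf path $\varnothing=t_0\prec t_1\prec\cdots\prec t_n=\ell$, and for $i<n$ let $s_i$ be the child of $t_i$ distinct from $t_{i+1}$; then each $t_i$ is the metric midpoint of $t_{i+1}$ and $s_i$, so the midpoint lemma applies at every level. Summing these estimates along the leaves as in Bourgain's ``fork'' computation --- again with $\|A\cdot\|$ in place of the ambient norm in the lower bounds and $\delta$ in place of the modulus of uniform convexity --- produces a lower bound on $D$ that increases with $n$, forcing $n$ bounded and giving the contradiction. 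In every case, then, the only real work beyond the reduction and the midpoint lemma is bookkeeping: carrying the multi-scale arguments of \cite{Bourgain} and \cite{JS} through while staying disciplined about which inequalities use the ambient norm (non-expansiveness, membership in $B_X$) and which use the operator norm (separation of images). This discipline --- which is exactly what guarantees the single-scale gain of the midpoint lemma can legitimately be compounded over all scales --- is the main obstacle.
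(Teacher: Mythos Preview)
Your reduction to a uniformly convex operator via Beauzamy's theorem and your midpoint lemma are correct and match the paper's single-scale input. The divergence --- and the gap --- is in the compounding step.

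For diamonds and Laakso graphs you derive $\|f(E_0)-f(E_1)\|\le(1-\delta)\,d(E_0,E_1)$ and then propose to iterate via a Markov-convexity-type telescoping sum or, alternatively, via an operator analogue of Pisier's power-type renorming. Neither is what the paper (or \cite{JS}) actually does, and neither is carried out here: Markov-convexity computations typically need a power-type modulus, and an operator Pisier theorem is not among the available tools. The paper's argument is a direct self-similar induction on $n$, and the estimate that iterates is not your equator bound but rather $\|f(t)-f(b)\|\le 2^n(1-\delta)^n\|f(s)-f(s')\|$ for some \emph{adjacent} pair $s,s'$ --- a bound on the pole-to-pole distance \emph{relative to the longest edge-image}. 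The base-case computation (bounding $\|u+v\|$ and $\|w+x\|$ separately in terms of $\max\{\|u\|,\|v\|,\|w\|,\|x\|\}$) is set up precisely to produce this relative bound; one then recurses into the copy of $D_1$ sitting on the edge $(s,s')$ to pick up the next factor of $2(1-\delta)$. Your absolute bound on the equator separation does not feed this recursion, so even granting ``follow \cite{JS}'' you have not set up the right quantity to iterate.

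For the trees the paper follows Kloeckner \cite{Kl} rather than Bourgain's original fork summation: from the observation that if $x,y,z\in B_X$ with $\|A(y-z)/2\|\ge 1/D$ then $\min\{\|x+y\|,\|x+z\|\}\le 2(1-\delta(1/2D))$, one proves by induction on $n$ that any admissible $f:B_{2^n}\to X$ sends some leaf to within $2^n(1-\delta)^n$ of the root image. So in every case the actual mechanism is an elementary self-similar recursion that you have not described; your proposed substitutes are heavier, are only sketched, and in the Pisier direction rely on a renorming result for operators that is not established in this paper.
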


The proofs closely resemble corresponding proofs for spaces.  The proof for binary trees resembles the proof for spaces from \cite{Kl}, while the proofs for diamonds and Laakso graphs resemble the proofs for spaces from \cite{JS}.

\begin{proof}  We may of course assume that $\|A\|\leqslant 1$ and that $A$ is uniformly convex.

 We first show that $\{B_n\}$ does not factor through $A$.   First note that if $x,y,z\in B_X$ are such that $\Bigl\|\frac{Ay-Az}{2}\Bigr\|\geqslant 1/D$, then $\min \{\|x+y\|, \|x+z\|\}\leqslant  2(1-\delta)$, where $\delta= \delta(1/2D)$. This is because in this case, either $\|\frac{Ax-Ay}{2}\|$ or $\|\frac{Ax-Az}{2}\|$ is at least $1/2D$.   We next use this observation in the proof of the following claim. For any $D\geqslant 1$, any $n\in \nn$, and $f:B_{2^n}\to X$ such that for all $s,t\in B_{2^n}$, $$D^{-1}d(s,t)\leqslant \|Af(s)-Af(t)\|\leqslant \|f(s)-f(t)\|\leqslant d(s,t),$$ then there exist $t_0, t_1\in \{0,1\}^{2^n}$ whose first terms are $0$ and $1$, respectively, such that $$\|f(t_0)-f(\varnothing)\|, \|f(t_1)-f(\varnothing)\|\leqslant 2^n(1-\delta)^n.$$   Here, $\delta=\delta(1/2D)$ as in the observation above.  We work by induction.  Suppose that $f:B_2\to X$ satisfies the hypotheses above.   By translating, we may assume $f(\varnothing)=0$.   Then to find $t_0$, we apply the observation above to the vectors $x=f((0))$, $y=f((0, 0))-x$, and $z=f((0,1))-x$.   To find $t_1$, we apply the observation above to the vectors $x=f((0))$, $y=f((1, 0))-x$, and $z=f((1, 1))-x$.     Next, assume the result holds for some $n\in \nn$ and suppose $f:B_{2^{n+1}}\to X$ satisfies the hypotheses above.   We may again assume that $f(\varnothing)=\varnothing$.  Applying the inductive hypothesis to $f|_{B_{2^n}}$, we may find $s_0, s_1\in \{0,1\}^{2^n}$ whose first terms are $0$ and $1$, respectively, such that $\|f(s_0)\|, \|f(s_1)\|\leqslant 2^n(1-\delta)^n$.    Then define $f_0,f_1:B_{2^n}\to X$ by $f_0(t)=f(s_0\cat t)$ and $f_1(t)=f(s_1\cat t)$.   Then these maps satisfy the hypotheses, and for $\ee\in \{0,1\}$, we may find $s_{\ee, 0}', s_{\ee,1}'\in \{0,1\}^{2^n}$ whose first terms are $0$ and $1$, respectively, such that $$\|f_\ee(s_{\ee,0}')-f(s_\ee)\|, \|f_\ee(s_{\ee, 1}')-f(s_\ee)\|\leqslant 2^n(1-\delta)^n.$$  Let $s_\varnothing=\varnothing$, $s_{(\ee, \ee')}= s_\ee\cat s_{\ee, \ee'}'$.  Then $g:B_2\to X$ given by $g(t)= f(s_t)/(2^n(1-\delta)^n)$ satisfies the hypotheses of the base case.  Therefore there exist $t_0, t_1\in \{0,1\}^2$ with first terms $0$, $1$, respectively, such that $\|g(t_0)\|, \|g(t_1)\|\leqslant 2(1-\delta)$.   Then if $t_0=(0, \ee)$, $$\|f(s_{0, \ee})\|= 2^n(1-\delta)^n \|g(t_0)\|\leqslant 2^{n+1}(1-\delta)^{n+1}.$$   We similarly deduce that if $t_1=(1, \ee)$, $\|f(s_{1, \ee})\|\leqslant 2^{n+1}(1-\delta)^{n+1}$.

For the diamond $D_n$, we let $t,b$ denote the vertices $t=(1, \ldots, 1)$ and $b=(0, \ldots, 0)$. Note that the sequences $t,b$ depend implicitly on $n$, but it will be clear from context in which diamond $t,b$ lie.   We claim that if $f:D_n\to X$ is any function such that for every $s,s'\in D_n$, $$D^{-1}d(s,s')  \leqslant \|Af(s)-Af(s')\|\leqslant \|f(s)-f(s')\|\leqslant d(s, s'),$$ then there exist adjacent vertices $s,s'$ in $D_n$ such that $$\|f(t)-f(b)\|\leqslant 2^n(1-\delta)^n \|f(s)-f(s')\|\leqslant 2^n(1-\delta)^n,$$ where $\delta=\delta(1/D)$. From this it follows that $D\geqslant (1-\delta)^{-n}$, since $d(t,b)=2^n$.  We prove the claim by induction on $n\in \nn$.  

Base case: Assume $f:D_1\to X$ is as above. By translating, we may assume $f(b)=0$.   Let $l=(0,1)$ and $r=(1,0)$.  Let $u=f(l)$, $v=f(r)$, and write $f(t)=u+w=v+x$.  Note that $\|u\|, \|v\|, \|w\|, \|x\|\leqslant 1$.   Moreover, if $u'=u/(\|u\|\vee \|v\|)$ and $v'=v/(\|u\|\vee \|v\|)$, $$\Bigl\|\frac{Au'- Av'}{2}\Bigr\|\geqslant d(l,r)/(2D(\|u\|\vee\|v\|))\geqslant 1/D.$$ Consequently, $\|u+v\|\leqslant 2(1-\delta)(\|u\|\vee \|v\|)$.   Similarly, with $w'=w/(\|w\|\vee\|x\|)$ and $x'=x/(\|w\|\vee\|x\|)$,  \begin{align*} \Bigl\|\frac{Aw'-Ax'}{2}\Bigr\| & = \Bigl\|\frac{A(f(t)-u)-A(f(t)-v)}{2}\Bigr\|(\|w\|\vee\|x\|)^{-1} \\ & = \Bigl\|\frac{Au-Av}{2}\Bigr\|(\|w\|\vee\|x\|)^{-1}\geqslant 1/D,\end{align*} so $\|w+x\|\leqslant 2(1-\delta)(\|w\|\vee \|x\|)$.   From this we deduce that \begin{align*} \|f(t)\| & =\Bigl\|\frac{u+w}{2}+\frac{v+x}{2}\Bigr\|\leqslant \Bigl\|\frac{u+v}{2}\Bigr\|+\Bigl\|\frac{w+x}{2}\Bigr\|\leqslant 2(1-\delta)(\|u\|\vee \|v\|\vee\|w\|\vee\|x\|) \\ & = 2(1-\delta)\|f(s)-f(s')\|\end{align*}  for some pair $s,s'$ of adjacent vertices.    

Inductive case: Suppose $f:D_{n+1}\to X$ is as above.  Recall that $d:\{0, 1\}^{2^n}\to \{0,1\}^{2^{n+1}}$ is the doubling function. Note that the doubling function  $d:D_n\to D_{n+1}$ doubles distances as well.     Define $g:D_n\to X$ by $g(s)=\frac{1}{2}f(d(s))$. Since $g$ satisfies the inequalities in the inductive hypothesis, there exist adjacent $u, u'\in D_n$ such that $\|g(t')-g(b')\|\leqslant 2^n (1-\delta)^n \|g(u)-g(u')\|$, where $t',b'$ denote the top and bottom of $D_n$.  From this it follows that $\|f(t)-f(b)\|\leqslant 2^n (1-\delta)^n \|f(d(u))-f(d(u'))\|$.   Note that the portion of $D_{n+1}$ which lies pointwise between $d(u)$ and $d(u')$ is isometrically identifiable with $D_1$.  Identifying this portion of $D_{n+1}$ with $D_1$, treating the restriction of $f$ to this portion of $D_{n+1}$ as a map of $D_1$ into $X$, and using the base case, we deduce that there exist adjacent $s, s'$ in this portion of $D_{n+1}$ such that $\|f(u)-f(u')\|\leqslant 2(1-\delta)\|f(s)-f(s')\|$.  From this, we deduce that $\|f(t)-f(b)\|\leqslant 2^n(1-\delta)^n \|f(u)-f(u')\|\leqslant 2^{n+1}(1-\delta)^{n+1}\|f(s)-f(s')\|$.    

We prove a similar argument for the Laakso graphs, using a similar self-similarity of the Laakso graphs.  We claim that if $f:L_n\to X$ is such that for each $s,t\in L_n$, $$\frac{1}{D} d(s,t)\leqslant \|Af(s)-Af(t)\|\leqslant \|f(s)-f(t)\|\leqslant d(s,t),$$  then there exist adjacent $s,s'\in L_n$ such that $$ \|f(1)-f(0)\|\leqslant 4^n(1-\delta)^n\|f(s)-f(s')\| \leqslant 4^n(1-\delta)^n,$$ where $\delta=\delta(1/D)$, where $1$ denotes the constantly $1$ sequence and $0$ denotes the constantly $0$ sequence.  This implies that $D\geqslant (1-\delta)^n$, since $d(0,1)=4^n$.    The $n=1$ case follows, after translating so that $f(0,0,0,0)=0$,  by considering the four vectors $0=f(0,0,0,0)$, $u=(1,1,0,0)$, $v=f(0,1,0,1)$, and $x=f(1,1,1,1)=u+w=v+x$.   Each vector has norm at most $2$, so arguing as with the diamonds we obtain a pair $t, t'$ from among the four vertices above such that $d(t, t')=2$ and such that $\|f(1,1,1,1)-f(0,0,0,0)\|\leqslant 2(1-\delta)\|f(t)-f(t')\|$.  Fix a vertex $t''$ between $t$ and $t'$ in $L_n$, and fix adjacent vertices $s,s'$ from among $t, t', t''$ so that $\|f(s)-f(s')\|=\max\{\|f(t)-f(t'')\|, \|f(t'')-f(t')\|\}$. Then  $$\|f(1,1,1,1)-f(0,0,0,0)\| \leqslant 2(1-\delta)\|f(t)-f(t'')\|\leqslant 4(1-\delta)\|f(s)-f(s')\|.$$   

For the inductive case, we fix an $f:L_{n+1}\to X$ satisfying the inequalities of the inductive hypothesis.  Then $g:L_n\to X$ given by $g(t)=4^{-1}f(q(t))$, where $q$ is the quadrupling function defined above, also satisfies the inequalities of the inductive hypothesis. This is because $q$ also quadruples ditances.     Applying the inductive hypothesis to $g$, there exist adjacent $s,s'\in L_n$ such that $$ \|g(1')-g(0')\| \leqslant 4^n(1-\delta)^n\|g(s)-g(s')\|,$$ where $1', 0'$ are the constantly $1$ and $0$ sequences, respectively, in $L_n$.  This implies that $$\|f(1)-f(0)\|=\|f(q(1))-f(q(0))\|\leqslant 4^n(1-\delta)^n\|f(q(s))-f(q(s'))\|.$$    Note that the portion of $L_{n+1}$ between $q(s)$ and $q(s')$ is isometrically identifiable with $L_1$ in a way which associates $s$ with $(0,0,0,0)$ and $s'$ with $(1,1,1,1)$, and $f$ restricted to this portion of $L_{n+1}$ can be thought of as a map of $L_1$ into $X$ satisfying the inequalities of the base case.   Therefore we may find adjacent $t, t'$ in this portion of $L_{n+1}$ such that $\|f(q(s))-f(q(s'))\|\leqslant 4(1-\delta)\|f(t)-f(t')\|$. Then $$\|f(1)-f(0)\| \leqslant 4^n(1-\delta)^n\|f(q(s))-f(q(s'))\|\leqslant 4^{n+1}(1-\delta)^{n+1}\|f(t)-f(t')\|.$$

\end{proof}

\subsection{Positive results}

The remainder of this work is devoted to proving the following.

\begin{theorem} Suppose $A:X\to Y$ is not super weakly compact.  Then $\{D_n\}$, $\{L_n\}$, $\{B_n\}$, and $B$ factor through $A$.

\end{theorem}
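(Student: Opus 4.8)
The plan is to use Corollary~\ref{main corollary} as the combinatorial engine. Since $A$ is not super weakly compact, we obtain $\psi>0$ and $c\geqslant 1$ so that in \emph{every} finite-codimensional subspace $Z\leqslant X$ we can find $(z_i)_{i=1}^n\subset B_Z$ with $(Az_i)_{i=1}^n$ being $c$-basic and all convex combinations of $(Az_i)_{i=1}^n$ of norm $\geqslant\psi$. The idea is to build the embedding $f$ of a graph ($B_n$, $D_n$, $L_n$, or $B$) into $X$ \emph{level by level}: at the root, use vectors $z_i$ chosen in $Z=X$ itself; when we need to branch below a vertex $v$ whose image has already been fixed using finitely many coordinates, we restrict attention to a finite-codimensional $Z_v\leqslant X$ on which those coordinates (and the relevant functionals from the basic sequences used so far) vanish, and apply Corollary~\ref{main corollary} again inside $Z_v$. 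The $c$-basic property guarantees that the new increments interact controllably with the old ones; the "convex combinations have norm $\geqslant\psi$ after applying $A$" property is exactly what forces the lower bound $\|Af(s)-Af(t)\|\geqslant D^{-1}d(s,t)$, since the difference of images along a geodesic unfolds as a scalar multiple of such a convex combination.

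I would organize the construction as follows. First, fix the normalization $\|A\|\leqslant 1$, so the upper estimates $\|f(s)-f(t)\|\leqslant d(s,t)$ will come for free from a $1$-Lipschitz construction. Second, handle the binary trees $\{B_n\}$: assign to each vertex $t\in\{0,1\}^k$ an increment vector $e_t\in B_{Z_{t^-}}$ coming from Corollary~\ref{main corollary}, scaled so that the edge $(t^-,t)$ is mapped to a vector of norm $\leqslant 1$, and set $f(t)=\sum_{\varnothing\prec s\preceq t} e_s$; the basis functionals from all ancestors are used to define $Z_{t^-}$, so that images of disjoint subtrees live in "almost disjointly supported" directions, and the convex-combination estimate gives $\|Af(s)-Af(t)\|\gtrsim \psi\, d(s,t)$ up to the basis constant $c$. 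Third, for the diamond and Laakso graphs, exploit their recursive self-similar structure (the doubling function $d$ and quadrupling function $q$ described in Section~3.2): embed the "backbone" geodesic from bottom to top using Corollary~\ref{main corollary} as above, then embed each branch recursively into a finite-codimensional subspace annihilating the backbone's coordinates, rescaling by the appropriate factor ($1/2$ for diamonds, $1/4$ for Laakso) at each level so the whole map stays $1$-Lipschitz; the two parallel paths between the top and bottom of a diamond correspond to two convex combinations summing correctly, and the norm-$\geqslant\psi$ property after $A$ yields the distortion lower bound. Fourth, the infinite binary tree $B$ is handled by a single coherent choice of $Z_{t^-}$'s across all levels (possible because at each vertex only finitely many constraints have accrued), exactly as in Baudier's argument for superreflexivity.

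The main obstacle I expect is bookkeeping the interaction between increments at \emph{incomparable} vertices: Corollary~\ref{main corollary} only controls, within one application, a single $c$-basic sequence and its convex combinations, so to chain together increments coming from many different applications (different subspaces $Z_v$) I need a stability lemma saying that if each block is $c$-basic and successive blocks are "nearly disjointly supported" in the sense that the functionals of earlier blocks annihilate later increments, then the whole concatenation is still $C(c)$-basic and retains a uniform lower bound on $A$-norms of convex combinations along geodesics. This is a standard perturbation/gliding-hump type argument, but making the constants uniform in $n$ (and in the depth of recursion for $D_n$ and $L_n$) is the technical heart; I would prove it once as a lemma and then invoke it for all four families. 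The rescaling factors in the recursive constructions also need to be tracked carefully so that the final distortion $D$ depends only on $\psi$ and $c$, not on $n$.
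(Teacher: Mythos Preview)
Your plan has a genuine gap in the lower estimate, and it also overlooks a far simpler route for the finite graphs.

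For $B_n$, $D_n$, and $L_n$ the paper does \emph{not} work level by level. It applies Corollary~\ref{main corollary} exactly once (with $Z=X$) to produce a single long sequence $(x_i)_{i=1}^N\subset B_X$ such that $(Ax_i)_{i=1}^N$ is $c$-basic and every convex combination of $(Ax_i)$ has norm at least $\psi$; it then quotes Bourgain (for $B_n$) and Johnson--Schechtman (for $D_n$, $L_n$) as black boxes, since those results say precisely that from such a sequence one obtains an embedding with $\|Af(s)-Af(t)\|\geqslant C(\psi,c)\,d(s,t)$. No gluing is needed, and the finite-codimensional freedom in Corollary~\ref{main corollary} is not used at this stage.

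The ``stability lemma'' you single out as the technical heart is in fact false, and this is where your approach breaks. Applying Corollary~\ref{main corollary} separately at each parent vertex and placing later increments in kernels of earlier norming functionals does make $(Ae_u)_u$ basic with a uniform constant, but along a root-to-leaf path of length $k$ you only know $\|Ae_j\|\geqslant\psi$ individually; being basic gives no lower bound on $\bigl\|\sum_{j=1}^k Ae_j\bigr\|$ that grows with $k$. The unit vector basis of $c_0$ is the standard counterexample: it is $1$-unconditional, each $e_{j+1}$ lies in the kernel of the norming functionals of $e_1,\ldots,e_j$, yet $\bigl\|\frac{1}{k}\sum_{j\leqslant k} e_j\bigr\|=1/k\to 0$. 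What the lower bound actually needs is a \emph{single} functional uniformly positive on every $Ae_j$ along the path (equivalently, the convex-combination property for the whole path sequence), and separate invocations of Corollary~\ref{main corollary} in nested subspaces give no mechanism to align their witnessing functionals. For the infinite tree $B$ the paper does glue pieces via an FDD, but the pieces are whole copies of $B_{2^n}$---each embedded by one call to Corollary~\ref{main corollary} inside a finite-codimensional subspace, using the already-established finite-tree result---not single edges; the global lower bound then follows because for any $s,t\in B$ a definite fraction of $d(s,t)$ falls inside one piece, and projecting onto that FDD block recovers the finite-tree estimate.
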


Johnson and Schechtman \cite{JS} showed that for each $n\in \nn$, for any $\psi>0$ and $c\geqslant 1$, there exists a constant $C_0=C_0(\psi, c)>0$ such that if $Y$ is a Banach space and if $(y_i)_{i=1}^{2^n}\subset Y$ is $c$-basic such that every convex combination $y$ of $(y_i)_{i=1}^{2^n}$ has norm at least $\psi$, then $f:D_n\to Y$ given by $f(k_1, \ldots, k_{2^n})=\sum_{i=1}^{2^n} k_i y_i$ satisfies $C_0d(s, s')\leqslant \|f(s)-f(s')\|$.   This, combined with Corollary \ref{main corollary} yields that if $A:X\to Y$ fails to be super weakly compact, then there exists $D>0$ such that for any $n\in \nn$, there exists $f:D_n\to X$ such that $$D^{-1}d(s,s')\leqslant \|Af(s)-Af(s')\|, \|f(s)-f(s')\|\leqslant d(s,s')$$ for each $s,s'\in D_n$. Indeed, by Corollary \ref{main corollary}, we may find constants $\psi, c$ depending only on $A$ such that for any $n\in \nn$, there exists $(x_i)_{i=1}^{2^n}$ such that $(y_i)_{i=1}^{2^n}=(Ax_i)_{i=1}^{2^n}$ is $c$-basic and every convex combination $y$ of $(y_i)_{i=1}^{2^n}$ has norm at least $\psi$.   We then take $f((k_i)_{i=1}^{2^n})=\sum_{i=1}^{2^n} k_ix_i$.   The upper estimate follows from the triangle inequality.  To see this, suppose that $d((k_1, \ldots, k_{2^n}) ,(l_1, \ldots, l_{2^n})) =p$, there exist sequences $(k_1, \ldots, k_{2^n})=s_0, s_1, \ldots, s_p=(l_1, \ldots, l_{2^n})$ so that $s_i$ and $s_{i+1}$ differ at exactly one coordinate.  Then since $\|x_i\|\leqslant 1$, $\|f(s_i)-f(s_{i+1})\|\leqslant 1$.    This yields that $\|f(s_0)- f(s_p)\|\leqslant p$.  The lower estimate follows with $D=C_0$.    

Johnson and Schechtman \cite{JS} also outline a proof very similar to the previous argument of the fact that that there exists a constant $C_1=C_1(\psi, c)>0$ such that if $Y$ is a Banach space and if $(y_i)_{i=1}^{4^n}\subset Y$ is $c$-basic such that every convex combination $y$ of $(y_i)_{i=1}^{4^n}$ has norm at least $\psi$, then $f:L_n\to Y$ given by $f(k_1, \ldots, k_{4^n})=\sum_{i=1}^{4^n} k_iy_i$ satisfies $C_1 d(s, s')\leqslant \|f(s)-f(s')\|$.    As in the previous paragraph, we obtain a positive factorization result for the Laakso graphs through non-super weakly compact operators.   

Bourgain \cite{Bourgain} showed that for each $n\in \nn$, there exists an enumeration $(s_i)_{i=0}^k$ of $B_n$ and a constant $C=C(\psi, c)>0$ such that for each $\phi>0$ and $c\geqslant 1$, if $Y$ is a Banach space and if $(y_i)_{i=1}^k\subset Y$ is $c$-basic such that every convex combination $y$ of $(y_i)_{i=1}^k$ has norm at least $\psi$, then if $y_{s_i}=y_i$ and $f(s)=\sum_{\varnothing\prec u\preceq s} y_u$, $$C d(s,s')\leqslant \|f(s)-f(s')\|$$ for each $s,s'\in B_n$.   This, combined with Corollary \ref{main corollary} yields that if $A:X\to Y$ fails to be super weakly compact, then there exists $D>0$ such that for any $n\in \nn$ and any $Z\leqslant X$ with finite codimension, there exists $f:B_n\to Z$ such that $$D^{-1}d(s,s')\leqslant \|Af(s)-Af(s')\|, \|f(s)-f(s')\|\leqslant d(s,t)$$ for each $s,s'\in B_n$.  We obtain a positive factorization result by choosing for each $n\in \nn$ some $(x_i)_{i=1}^k$ such that $(y_i)_{i=1}^k=(Ax_i)_{i=1}^k$ satisfies the conditions above.     Note that by Corollary \ref{main corollary}, for any $n\in \nn$, the function $f:B_n\to X$ may actually be taken to map into any $Z\leqslant X$ such that $\dim X/Z<\infty$.    

Assume $A:X\to Y$ is not super weakly compact and $\|A\|\leqslant 1$.  We use the previous paragraph to show that if $A:X\to Y$ fails to be super weakly compact, then there exists a bi-Lipschitz map $f:B\to X$ and $D$ such that for each $s,s'\in B$, $$D^{-1}d(s,s')\leqslant \|Af(s)-Af(s')\|\leqslant  \|f(s)-f(s')\|\leqslant d(s,s').$$   This proof modifies Baudier's proof of the corresponding result for non-superreflexive Banach spaces \cite{Baudier}. 

Recall that $r_n=2^n-1$ and $L_n=\{t\in B: r_n\leqslant |t|<r_{n+1}\}$ for $n=0, 1, \ldots$. Recall also that $\ell(s)=\max\{n: r_n\leqslant |s|\}$.  We will first partition $B$ into sets $(S_i)_{i=1}^\infty$ and obtain $0=q_0<q_1<\ldots$ such that $(S_i)_{i=q_{n-1}+1}^{q_n}$ partitions the $n^{th}$ level $L_n$.   We have $q_0=0$, $q_1=1$, and $S_1=\{\varnothing\}=L_0$.  Assuming that $q_{n-1}$ and $(S_i)_{i=1}^{q_{n-1}}$ have been chosen, we enumerate the maximal members $(t_i)_{i=1}^r$ of $L_{n-1}$.   Then each member of $L_n$ is an extension of a unique $t_i$.  We let $q_n=q_{n-1}+r$ and let $S_{q_{n-1}+i}$ denote those members of $L_n$ extending $t_i$.   Note that $\{t_i\}\cup S_i$ is isometrically identifiable with $2^n$ via the isometry $t\mapsto t_i\cat t$ from $B_{2^n}$ to $\{t_i\}\cup S_i$.   We may recursively select finite-dimensional subspaces $(E_i)_{i=1}^n$ of $Y$ such that the projection from $W=[E_i:i\in \nn]$ onto $E_i$ has norm at most $6$ and, for $i\in (q_{n-1}, q_n]$, a function $f_i:B_{2^n}\to E_i$ such that $f_i(\varnothing)=0$ and for every $s,t\in B_{2^n}$, $$D^{-1}d(s,t)\leqslant \|Af_i(s)-Af_i(t)\|\leqslant \|f_i(s)-f_i(t)\|\leqslant d(s,t).$$ Here $D$ depends only on the operator $A$.   We define the recursive construction. Since $S_1=\{\varnothing\}$, we may take $E_1=\{0\}$.  Assuming $E_1, \ldots, E_{i-1}$ have been defined, we fix a finite subset $J_i$ of $B_{Y^*}$ which is $2$-norming for $[E_1, \ldots, E_{i-1}]$.  Let $n$ be such that $i\in (q_{n-1}, q_n]$  and define $f_i:B_{2^n}\to \cap_{y^*\in J_i} \ker(A^*y^*)$ as above.  Let $E_i=[f_i(t): t\in B_{2^n}]$.    This completes the recursive construction.   Note that with this construction, $(E_i)$ is an FDD for a subspace $W$ of $Y$ having projection constant not exceeding $6$.  Let $P_i:W\to E_i$ denote the projection onto $E_i$.  

Let $f(\varnothing)=0$.  For any $s\in B\setminus\{\varnothing\}$, we may uniquely write $s=s_1\cat \ldots \cat s_n$, where for each $1\leqslant i<n$, $s_1\cat \ldots \cat s_i$ is maximal in $L_i$.  That is, $|s_i|=r_{i+1}-1$.  Then for each $1\leqslant i<n$, $|s_i|=2^i$.  For each $1\leqslant i\leqslant n$, there exists a unique $j_i\in (q_{i-1}, q_i]$ such that $s_1\cat \ldots \cat s_i\in S_{j_i}$.   Let $f(s)=\sum_{i=1}^n f_{j_i}(s_i)$.   We will show that for any $s,t\in B$, $$\frac{1}{48 D} d(s,t)\leqslant \|Af(s)-Af(t)\|\leqslant \|f(s)-f(t)\|\leqslant d(s,t).$$   The basic idea for establishing the lower estimate is that for any $s,t\in B$, a significant fraction of $d(s,t)$ will be contained within a single set $S_i$, and the projection of $Af(t)-Af(s)$ onto $E_i$ will give the desired lower estimate.

Fix $s,t\in B$ with $s\neq t$, $s, t\neq \varnothing$. Assume that either $s\prec t$ or that neither $s$ nor $t$ is an initial segment of the other and $|s|\leqslant |t|$.    Write $s=s_1\cat \ldots \cat s_m$ and $t=t_1\cat \ldots \cat t_n$. Note that since we have assumed $|s|\leqslant |t|$, $m\leqslant n$.  Suppose that $k$ is such that $s_i=t_i$ for each $1\leqslant i<k$ and $s_k\neq t_k$. Fix $j_k, \ldots, j_m$ and $l_k, \ldots, l_n$ such that $s_1\cat\ldots \cat s_i\in S_{j_i}$ (resp.  $t_1\cat \ldots \cat t_i\in S_{l_i}$) for each $k\leqslant i\leqslant m$ (resp. for each $k\leqslant i\leqslant n$).  Note that since $s_i=t_i$ for each $1\leqslant i<k$, $j_k=l_k$, since $s_1\cat\ldots\cat s_k$ and $t_1\cat\ldots \cat t_k$ are members of $L_k$ which extend the same maximal member $s_1\cat \ldots s_{k-1}=t_1\cat \ldots \cat t_{k-1}$ of $L_{k-1}$ if $k>0$ . Since $s_k\neq t_k$, it follows that $\{j_i:i>k\}\cap \{l_i: i>k\}=\varnothing$. The disjointness of these sets means that for any $k<i\leqslant m$, $P_{j_i}(Af(s)-Af(t))=Af_{j_i}(s_i)$ and for any $k<i\leqslant n$, $P_{l_i}(Af(t)-Af(s))=Af_{l_i}(t_i)$.  Moreover, $P_{j_k}(f(t)-f(s))=f_{j_k}(t_k)-f_{j_k}(s_k)$.  Then \begin{align*} \|f(t)-f(s)\| & \leqslant \|f_{j_k}(s_k)+f_{j_k}(t_k)\| + \sum_{i=k+1}^m \|f_{j_i}(s_i)\| + \sum_{i=k+1}^n \|f_{l_k}(t_k)\| \\ & \leqslant d(s_k, t_k)+\sum_{i=k+1}^m d(\varnothing, s_i)+\sum_{i=k+1}^n d(\varnothing, t_i) \\ &  = d(s_k, t_k)+\sum_{i=k+1}^m |s_i|+\sum_{i=k+1}^n |t_i|=d(s,t).  \end{align*}  This gives the desired upper estimate.    To establish the lower estimate, we consider three cases.  First, suppose that $k<n-1$.  Then $d(s,t)\leqslant |s|+|t|\leqslant 2^{n+1}+2^{n+1}=2^{n+2}$ and $P_{l_{n-1}}(Af(t)-Af(s))=Af_{l_{n-1}}(t_{n-1})$. The latter fact implies that $$\|Af(t)-Af(s)\|\geqslant \|Af_{l_{n-1}}(t_{n-1})\|/6 \geqslant |t_{n-1}|/6D = 2^{n-1}/6D \geqslant d(s,t)/48 D.$$   Next, suppose that $k=n$.  Then $d(s,t)=d(s_k, t_k)=d(s_n, t_n)$ and $$\|Af(t)-Af(s)\| \geqslant \|P_{j_n}(Af(t)-Af(s))\| = \|Af_{j_n}(s_n)-Af_{j_n}(t_n)\|\geqslant d(s_n, t_n)/D = d(s,t)/D.$$   Finally, suppose that $k=n-1$.  Then since $k\leqslant m\leqslant n$, $n-1\leqslant m\leqslant n$.  If $m=n-1$, for convenience, let $s_n=\varnothing$, so that $s=s_1\cat\ldots \cat s_n$.   Then $d(s,t)=d(s_{n-1}, t_{n-1})+|s_n|+|t_n|$, and one of these three quantities must be at least $d(s,t)/3$.  But each of these three quantities is at most $6D\|Af(t)-Af(s)\|$. This is because $$ 6\|Af(t)-Af(s)\| \geqslant \|P_{j_k}(f(t)-f(s))\|=\|f_{j_k}(s_k)-f_{j_k}(t_k)\|\geqslant d(s_k, t_k)/D=d(s_{n-1}, t_{n-1})/D,$$ $$6\|Af(t)-Af(s)\| \geqslant \|P_{l_n}(f(t)-f(s))\| = \|Af_{l_n}(t_n)\|\geqslant |t_n|/D,$$ and $$6\|Af(t)-Af(s)\| \geqslant \|P_{j_n}(f(t)-f(s))\|=\|P_{j_n}(s_n)\|\geqslant |s_n|/D,$$ where the last line is omitted if $m=n-1$, in which case $|s_n|=0$.   This establishes the desired lower estimate when neither $s$ nor $t$ is the empty sequence.  

Last, we wish to estimate $\|Af(t)-Af(\varnothing)\|=\|Af(t)\|$.    Write $t=t_1\cat\ldots \cat t_n$ as above.  If $n=1$,  there exists some $l$ such that $\|Af(t)\|=\|Af_l(t_1)\|\geqslant d(t_1, \varnothing)/D=d(t, \varnothing)/D$. If $n>1$, $d(t_{n-1}, \varnothing)= 2^{n-1} = 2^{n+1}/4\geqslant d(t, \varnothing)/4$, and for some $l$, $$6 \|Af(t)\| \geqslant \|P_lAf(t)\| = \|Af_l(t_{n-1})\| \geqslant d(t, \varnothing)/4D.$$

\end{document}